\numberwithin{equation}{section}
\newcommand{\origsetminus}{} \let\origsetminus=\setminus           % redefine square root
\renewcommand{\setminus}{\!\origsetminus\!}
\let\oldmarginpar\marginpar
\renewcommand\marginpar[1]{\-\oldmarginpar[\raggedleft\footnotesize #1]%
{\raggedright\footnotesize #1}}
\theoremstyle{plain}
\newtheorem{lemma}{Lemma}[section]
\newtheorem{theorem}[lemma]{Theorem}
\newtheorem{proposition}[lemma]{Proposition}
\newtheorem{example}[lemma]{Example}
\newtheorem{remark}[lemma]{Remark}
\renewcommand{\mathbb}{\mathbbm}                     % use mathbbm
\renewcommand{\epsilon}{\varepsilon}                 % AMS symbols
\renewcommand{\phi}{\varphi}
\renewcommand{\theta}{\vartheta}
\renewcommand{\le}{\leqslant}
\renewcommand{\ge}{\geqslant}
\newcommand{\origfoo}{} \let\origfoo=\sqrt           % redefine square root
\renewcommand{\sqrt}[1]{\origfoo{#1}\;}
\newcommand{\abs}[1]{\left\lvert #1 \right\rvert}    % absolut value
\newcommand{\norm}[1]{\left\lVert #1 \right\rVert}   % norm
\DeclareMathOperator{\R}{{\mathbb R}}                % reals
\DeclareMathOperator{\C}{{\mathbb C}}                % complexe values
\DeclareMathOperator{\N}{{\mathbb N}}                % integer
\newcommand{\A}{{\mathcal A}}
\DeclareMathOperator{\Borel}{{\mathcal B}}
\newcommand{\scapro}[2]{\langle #1,#2\rangle}       %Skalarprodukt
\DeclareMathOperator{\1}{\mathbbm 1}
\renewcommand{\L}{{\mathcal L}}
\newcounter{zahl}
\DeclareMathOperator{\Cc}{{\mathcal C}} \DeclareMathOperator{\Rad}{{\mathcal R}}
\DeclareMathOperator{\Z}{{\mathcal Z}}
\title{Radonifying operators \\and \\infinitely divisible Wiener integrals}
\author{ Markus Riedle\footnote{The author acknowledges the EPSRC grant EP/I036990/1}\\
Department of Mathematics\\
King's College London\\
London WC2R 2LS\\
United Kingdom
  % \and
%   Onno van Gaans \\
%Mathematical Institute\\ Leiden University \\P.O.\ Box 9512\\
%2300 RA Leiden\\
% The Netherlands
 }
\begin{document}

\maketitle

\begin{center}
\begin{minipage}[t]{11cm}
\begin{center}
   Dedicated to the 50-th anniversary of the\\
Department of Theory of Random Processes\\
Institute of Mathematics National Academy of Sciences of
Ukraine
\end{center}
\end{minipage}
\end{center}
\vspace*{1cm}

\begin{abstract}
In this article we illustrate the relation between the existence of  Wiener integrals with respect to a L\'evy process in a separable Banach space and radonifying operators. For this purpose, we introduce the class of $\theta$-radonifying operators, i.e.\ operators which map a cylindrical measure $\theta$ to a genuine Radon measure. We study this class of operators for various examples of infinitely divisible cylindrical measures $\theta$ and highlight the differences from the Gaussian case.

\end{abstract}
2010 Mathematics Subject Classification: 60H05; 28C20;  47B32; 60E07.\\[.2em]
Key words and phrases: cylindrical measures, infinitely divisible, stochastic integrals, reproducing kernel Hilbert space.

\section{Introduction}

Starting with the work by Gel'fand \cite{Gelfand}, Gross \cite{Gross} and Segal \cite{Segal} the canonical Gaussian cylindrical  measure has gained much attention in different areas of mathematics and applications.
It is not only of interest from a theoretical point of view but it is also of importance in various applications such as filtering problems in Bensoussan \cite{Bensoussan}, small ball probabilities in Li and Linde \cite{LiLinde},  interest rate models in Carmona and Tehranchi \cite{Carmona} and stochastic integration in Banach spaces in van Neerven, Veraar and Weis \cite{vanNeervenetal07}.

In his seminal work \cite{Gross} Gross studies norms on a Hilbert space $H$ such that the canonical Gaussian cylindrical measure $\gamma$ extends to a $\sigma$-additive probability measure on the completion of $H$ with respect to the norm. This directly
leads to the class $\Rad(\gamma)$ of $\gamma$-radonifying operators, which consists of linear and bounded operators $T$ from $H$ to a Banach space $V$ such that the cylindrical  image measure $\gamma\circ T^{-1}$ extends to a $\sigma$-additive probability measure. The space $\Rad(\gamma)$ is known to have many desirable properties, such as the completeness under an appropriate norm, the ideal property and close relations to absolutely summing operators.

Recently, the space of $\gamma$-radonifying operators plays a fundamental role
in the theory of stochastic integration in Banach spaces. In \cite{vanNeervenetal07}, van Neerven, Veraar and Weis  develop a theory of stochastic integration for random operator-valued integrands with respect to cylindrical Wiener processes in UMD Banach spaces. Their approach is strongly based on the corresponding Wiener integrals for deterministic integrands introduced in \cite{BrzezniakvanNeerven00} and  \cite{vanNeervenWeis}, and those existence is naturally closely related to the class of $\gamma$-radonifying operators.

In our  work \cite{OnnoMarkus}, we extend the approach in \cite{vanNeervenWeis} to Wiener integrals for deterministic integrands with respect to martingale-valued measures, in particular to L\'evy processes. However, this work \cite{OnnoMarkus} was accomplished under the constraint not being able to use any of the fundamental properties of the space of $\gamma$-radonifying operators since the analogue theory was not developed in a non-Gaussian setting. It became apparent, that if one would like to develop a theory of stochastic integration for random integrands similarly to the one in \cite{vanNeervenetal07} but for L\'evy processes, one needs to study an analog class of operators as $\gamma$-radonifying operators but radonifying an infinitely divisible cylindrical  measure. This is the main motivation of this work where we show that one can introduce such a space of operators although it lacks many of the fundamental properties of $\gamma$-radonifying operators.

%Different to the case of infinitely divisible distribution,
The canonical Gaussian cylindrical measure $\gamma$ is distinguished among all Gaussian cylindrical measures by its characteristic function, which most often serves also as its definition. Equivalently, starting from a Gaussian cylindrical random variable $X$ in an arbitrary Banach space $V$ with covariance operator $Q$, one can explicitly construct a cylindrical random variable $\Theta$ in the reproducing kernel Hilbert space of $Q$ whose cylindrical distribution equals the canonical Gaussian cylindrical distribution $\gamma$. This construction is based on the Karhunen-Lo{\`e}ve expansion of $X$. We show in the first part of this work, that this construction of a canonically Gaussian distributed cylindrical random variable $\Theta$ on the reproducing kernel Hilbert space can be mimicked for each cylindrical random variable with second moments. Denoting the cylindrical distribution of $\Theta$ by $\theta$, this construction motivates us to define the class $\Rad(\theta)$ of $\theta$-radonifying operators in analogy to $\gamma$-radonifying operators as the space of operators $T$ such that the image cylindrical measure $\theta\circ T^{-1}$ extends to a $\sigma$-additive probability measure on the Borel $\sigma$-algebra.

The class $\Rad(\theta)$ of $\theta$-radonifying operators is only well studied if $\theta$ equals the canonical Gaussian cylindrical measure $\gamma$ or a canonical stable
cylindrical measure. In this work we show that for an arbitrary cylindrical measure $\theta$ the linear space $\Rad(\theta)$ can be equipped with a certain norm, introduced in this  work,  such that it becomes complete.  However, already in the case of a canonical stable cylindrical measure $\theta$, which might be considered as a non-Gaussian cylindrical measure most similar to  the canonical Gaussian cylindrical measure $\gamma$, it is known that the space $\Rad(\theta)$ lacks many of the desirable properties of the space of $\gamma$-radonifying operators.  We study the linear space  $\Rad(\theta)$ for different examples of  infinitely divisible cylindrical measures $\theta$ and compare it to the Gaussian situation.

In the last part of this work, we illustrate the relation of $\theta$-radonifying operators and the existence of Wiener integrals with respect to a L\'evy process. Although this is  the underlying idea in the work
\cite{vanNeervenWeis} and to some extent in the generalisation \cite{OnnoMarkus}, we are able to illustrate this relation more explicitly by defining a {\em cylindrical} integral, which in the case of stochastic integrability is induced by a genuine Banach space valued random variable. In particular for  L\'evy driven integrals, this rigorous relation between stochastic integrability and Banach space valued  operators is novel, and it signficantly improves the description of integrable operators in \cite{OnnoMarkus}.

\section{Preliminaries}\label{se.preliminaries}

Throughout this paper, $V$ is a separable Banach space with dual $V^\ast$ and dual pairing  $\scapro{\cdot}{\cdot}$. The Borel $\sigma$-algebra is denoted by $\Borel(V)$.
If $U$ is another separable Banach space the space of bounded and linear operators is denoted by $\L(U,V)$ equipped with the uniform operator norm $\norm{\cdot}_{U\to V}$. An operator $T\in \L(U,V)$ is called {\em $p$-absolutely summing}
if there exists a constant $c>0$ such that for each $n\in\N$ and $u_1,\dots, u_n\in U$ it obeys
\begin{align}\label{eq.def-summable}
  \sum_{k=1}^n \norm{Tu_k}^p\le c^p \sup_{\norm{u^\ast}\le 1}\sum_{k=1}^n \abs{\scapro{u_k}{u^\ast}}^p.
\end{align}
The space of all $p$-absolutely summing operators is denoted by $\Pi^p(U,V)$ and it is a  Banach space
 under the norm $\norm{T}_{\Pi^p}:=\pi_p(T)$ where $\pi_p(T)$ is the smallest constant $c$ satisfying \eqref{eq.def-summable}.

For a measurable space $(S,{\mathcal S}, m)$ and $p\ge 1$ we shall denote the Lebesgue-Bochner space by $L^p_m(S;V)$. A probability space is denoted by $(\Omega,\A,P)$ and $L^0_P(\Omega;\R)$ denotes the space of equivalence classes of measurable functions equipped with the topology of
convergence in probability.

For every $v^\ast_1,\dots, v^\ast_n\in V^{\ast}$ and $n\in\N$ we define a linear map
\begin{align*}
  \pi_{v^\ast_1,\dots, v^\ast_n}\colon V\to \R^n,\qquad
   \pi_{v^\ast_1,\dots, v^\ast_n}(v)=\big(\scapro{v}{v^\ast_1},\dots,\scapro{v}{v^\ast_n}\big).
\end{align*}
For $n\in\N$ and $B\in \Borel(\R^n)$, sets of the form
 \begin{align*}
C(v^\ast_1,\dots ,v^\ast_n;B):&= \{v\in V:\, (\scapro{v}{v^\ast_1},\dots,
 \scapro{v}{v^\ast_n})\in B\}
 = \pi^{-1}_{v^\ast_1,\dots, v^\ast_n}(B)
\end{align*}
are called {\em cylindrical sets}. If $D$ is a subset of $V^\ast$ then
\begin{align*}
  \Z(V,D):=\left\{\pi_{v_1^\ast,\dots, v_n^\ast}^{-1}(B):\, v_1^\ast,\dots, v_n^\ast\in D,\,
     B\in \Borel(\R^n),\, n\in \N\right\},
\end{align*}
defines the {\em cylindrical algebra generated by $D$}. The generated $\sigma$-algebra is denoted by $\Cc(V,D)$ and it is called the {\em cylindrical $\sigma$-algebra with
respect to $(V,D)$}. If $D=V^\ast$ we write $\Z(V):=\Z(V,D)$ and $\Cc(V):=\Cc(V,D)$.

A function $\eta\colon \Z(V)\to [0,\infty]$ is called a {\em cylindrical measure on
$\Z(V)$} if for each finite subset $D\subseteq V^\ast$ the restriction of
$\eta$ to the $\sigma$-algebra $\Cc(V,D)$ is a measure. A cylindrical
measure $\eta$ is called finite if $\eta(V)<\infty$ and a cylindrical probability
measure if $\eta(V)=1$. The characteristic function $\phi_\eta$ of a
finite cylindrical measure $\eta$ is defined by
\begin{align*}
\phi_\eta\colon V^\ast\to\C,\qquad \phi_{\eta}(v^\ast):=\int_V e^{i\scapro{v}{v^\ast}}\,\eta(dv).
\end{align*}
We will always assume that the characteristic function is continuous, in which case the cylindrical measure $\eta$ is called {\em continuous}. A cylindrical measure $\eta$ has $p$-th weak moments if
\begin{align*}
  \int_V \abs{\scapro{v}{v^\ast}}^p\,\eta(dv)<\infty\qquad\text{for all }
  v^\ast\in V^\ast.
\end{align*}
A cylindrical measure $\eta$ is of cotype $p$ if it has $p$-th weak moments and
for each sequence $(v_n^\ast)_{n\in\N}\subseteq V^\ast$ the condition
\begin{align*}
\int_V \abs{\scapro{v}{v_n^\ast}}^p\,\eta(dv) \to 0 \quad\text{for }n\to\infty,
\end{align*}
implies that $\norm{v_n^\ast}\to 0$.

A {\em cylindrical random variable $Z$ in $V$} is a linear and continuous map
\begin{align*}
 Z\colon V^\ast \to L^0_P(\Omega;\R).
\end{align*}
The cylindrical random variable $Z$ has weak $p$-th moments if $E[\abs{Zv^\ast}^p]<\infty$ for all
$v^\ast\in V^\ast$. In this case, the closed graph theorem implies that $Z\colon V^\ast \to L^p_P(\Omega;\R)$ is continuous.
The characteristic function of a cylindrical random  variable $Z$ is
defined by
\begin{align*}
 \phi_Z\colon V^\ast \to\C, \qquad \phi_Z(v^\ast)=E\big[\exp(iZv^\ast)\big].
\end{align*}
By defining for each cylindrical set $C=C(v_1^\ast,\dots, v_n^\ast;B)\in \Z(V)$ the mapping
\begin{align*}%\label{eq.relcylmeas}
  \eta_Z(C):=P\big((Zv^\ast_1,\dots, Zv^\ast_n)\in B\big),
\end{align*}
we obtain a cylindrical probability measure $\eta_Z$, which is called the {\em cylindrical distribution of $Z$}. The characteristic functions $\phi_{\eta_Z}$ and $\phi_Z$ of $\eta_Z$ and $Z$ coincide. Conversely, for every cylindrical probability measure $\eta$ on
$\Z(V)$ there exist a probability space $(\Omega,\A,P)$ and a
cylindrical random variable $Z\colon V^\ast\to L^0_P(\Omega;\R)$ such
that  $\eta$ is the cylindrical distribution of $Z$; see \cite[VI.3.2]{Vaketal}.

A cylindrical random variable $Z\colon V^\ast\to L^0_P(\Omega;\R)$ is called {\em induced by
a random variable in $L^p_P(\Omega;V)$} if there exists $Y\in L^p_P(\Omega;V)$ such that
\begin{align*}
  \scapro{Y}{v^\ast}= Zv^\ast\qquad\text{for all }v^\ast \in V^\ast.
\end{align*}
This is equivalent to the fact that the cylindrical distribution of $Z$ extends to a probability measure on $\Borel(V)$; see Theorem IV.2.5 in \cite{Vaketal}.

\section{Infinitely divisible cylindrical measures}\label{se.infdiv}

The class of infinitely divisible cylindrical probability measures is introduced in \cite{Riedle11}.
A cylindrical probability measure $\eta$ on $\Z(V)$ is called {\em infinitely divisible}
if  for each $k\in\N$ there exists a cylindrical  probability measure $\eta_k$ such that
$\eta=\eta_k^{\ast k}$. Theorem 3.13 in \cite{Riedle11} shows that a cylindrical probability measure $\eta$ is infinitely divisible if and only if
\begin{align*}
  \eta\circ \pi_{v_1^\ast,\dots, v_m^\ast}^{-1}
  \text{ is infinitely divisible on $\Borel(\R^m)$
 for all $v_1^\ast,\dots, v_m^\ast\in V^\ast$ and $m\in\N$.}
\end{align*}
In this equivalent description it is not sufficient only to take $n=1$ as it is shown even in the case $V=\R^2$ in \cite{Marcus} and \cite{GineHahn}.

Let $X$ be an infinitely divisible cylindrical random variable, that is its cylindrical distribution is infinitely divisible, and assume that $X$ has weak second moments and $E[Xv^\ast]=0$ for all $v^\ast\in V^\ast$. Define the covariance operator by
\begin{align*}
  Q\colon V^\ast\to V^{\ast\ast}, \qquad
  \scapro{Qv^\ast}{w^\ast}=E\big[(Xv^\ast)(Xw^\ast)\big].
\end{align*}
The range of $Q$, i.e. the continuity of $Qv^\ast\colon V^\ast\to \R$, follows
from the Cauchy-Schwarz inequality and the continuity of $X\colon V^\ast\to L^2_P(\Omega;\R)$.

For the following we assume that $Q$ is $V$-valued. This is guaranteed for example if
 $X$ is a genuine random variable (see Theorem III.2.1 in \cite{Vaketal}), in which case we set $Xv^\ast=\scapro{X}{v^\ast}$. Other examples of a $V$-valued covariance operator will be seen later in Section \ref{se.Application}. As the covariance operator $Q\colon V^\ast\to V$ is positive and symmetric, it follows that there exists a Hilbert space $H$ and $j\in L(H, V)$ such that
$Q=j j^\ast$ and $j(H)$ is dense in $V$. Moreover, the Hilbert space $H$ is unique up to isomorphism and separable as $V$ is separable; see Section III.1.2 in \cite{Vaketal}.

Since the range of $j^\ast$ is dense in $H$ we can choose an orthonormal basis $(e_k)_{k\in\N}$ of $H$ with $e_k\in j^\ast(V^\ast)$. Thus, there exist some elements $v_k^\ast \in V^\ast $ obeying $j^\ast v_k^\ast=e_k$ for all $k\in\N$. It follows
that
\begin{align}\label{eq.Loeve}
  Xv^\ast= \sum_{k=1}^\infty \scapro{je_k}{v^\ast} Xv_k^\ast
  \qquad\text{for all }v^\ast\in V^\ast,
\end{align}
where the sum converges in $L^2_P(\Omega;\R)$.
This representation is the Karhunen-Lo{\`e}ve representation and is in this form
established in \cite{ApplebaumRiedle}. Define a cylindrical random variable by
\begin{align}\label{eq.canZ}
  \Theta_X\colon H\to L^2_P(\Omega;\R), \qquad \Theta_Xh= \sum_{k=1}^\infty \scapro{e_k}{h} Xv_k^\ast.
\end{align}
The fact, that $\Theta_X$ is well defined and is a cylindrical random variable follows from the following lemma where we collect some simple properties of $\Theta_X$ and its cylindrical probability distribution $\theta_X$.
\begin{lemma}\label{le.properties}
For a cylindrical random variable $X\colon V^\ast\to L^2_P(\Omega;\R)$ let $\theta_X$ denote the cylindrical distribution of $\Theta_X$
defined in \eqref{eq.canZ}. Then we have:
\begin{enumerate}
  \item[\rm (a)] $\displaystyle E\left[\abs{\Theta_Xh}^2\right]  =\norm{h}^2$ for all $h\in H$.
  \item[\rm (b)] the cylindrical distribution of $X$ equals $\theta_X\circ j^{-1}$.
  \item[\rm (c)] $\theta_X$ is of cotype 2.
  \item[\rm (d)] $\theta_X$ is infinitely divisible.
\end{enumerate}
\end{lemma}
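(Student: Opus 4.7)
The four parts can be attacked using the Karhunen–Lo\`eve representation \eqref{eq.Loeve} and the identity $j^\ast v_k^\ast = e_k$. The key preliminary observation is that the random variables $Xv_k^\ast$ are orthonormal in $L^2_P(\Omega;\R)$: indeed
\begin{align*}
  E\bigl[(Xv_k^\ast)(Xv_l^\ast)\bigr]=\scapro{Qv_k^\ast}{v_l^\ast}=\scapro{jj^\ast v_k^\ast}{v_l^\ast}=\scapro{e_k}{j^\ast v_l^\ast}_H=\scapro{e_k}{e_l}_H=\delta_{kl}.
\end{align*}
Once this is noted, (a) is immediate: for $h\in H$, Parseval's identity yields $E[|\Theta_Xh|^2]=\sum_k|\scapro{e_k}{h}|^2=\norm{h}^2$. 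This also shows the series defining $\Theta_Xh$ converges in $L^2_P$ and that $\Theta_X\colon H\to L^2_P(\Omega;\R)$ is a bounded linear (hence cylindrical) map.

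For (b), I would check the identity at the level of finite-dimensional cylinders. For $v_1^\ast,\dots,v_n^\ast\in V^\ast$ and $B\in\Borel(\R^n)$,
\begin{align*}
  (\theta_X\circ j^{-1})\bigl(C(v_1^\ast,\dots,v_n^\ast;B)\bigr)
  = P\bigl((\Theta_X j^\ast v_1^\ast,\dots,\Theta_X j^\ast v_n^\ast)\in B\bigr).
\end{align*}
But $\Theta_X j^\ast v^\ast=\sum_k\scapro{e_k}{j^\ast v^\ast}_H Xv_k^\ast=\sum_k\scapro{je_k}{v^\ast}Xv_k^\ast$, which by \eqref{eq.Loeve} equals $Xv^\ast$ in $L^2_P$. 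The right-hand side is therefore the cylindrical distribution of $X$ evaluated on the same cylinder, giving (b).

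For (c), the weak second moments are guaranteed by (a). If $(h_n)\subseteq H$ satisfies $\int_H|\scapro{h}{h_n}|^2\,\theta_X(dh)\to 0$, then this integral equals $E[|\Theta_X h_n|^2]=\norm{h_n}^2$ by (a), so $\norm{h_n}\to 0$, which is the cotype~$2$ property.

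For (d), I use the finite-dimensional characterization of Theorem~3.13 in \cite{Riedle11}: it suffices to show that $\theta_X\circ \pi_{h_1,\dots,h_n}^{-1}$ is infinitely divisible on $\Borel(\R^n)$ for each $h_1,\dots,h_n\in H$. Truncating the defining series, set $w_N^\ast(h_j):=\sum_{k=1}^N\scapro{e_k}{h_j}v_k^\ast\in V^\ast$; by linearity of $X$, the vector $(Xw_N^\ast(h_1),\dots,Xw_N^\ast(h_n))$ is the image of the cylindrical distribution of $X$ under a linear map into $\R^n$, hence infinitely divisible (infinite divisibility is preserved by linear maps between Euclidean spaces, and the cylindrical distribution of $X$ is assumed infinitely divisible). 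Letting $N\to\infty$, these vectors converge in $L^2_P$, hence in distribution, to $(\Theta_X h_1,\dots,\Theta_X h_n)$; since the class of infinitely divisible laws on $\R^n$ is closed under weak limits, the limit law is infinitely divisible as well.

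The only mildly delicate point I anticipate is justifying the manipulations with the infinite series in (b) and (d)—in particular, checking that $\Theta_X j^\ast v^\ast$ really coincides with $Xv^\ast$ (not merely with an $L^2$-limit of partial sums)—but this is exactly what \eqref{eq.Loeve} guarantees, so no new work is required.
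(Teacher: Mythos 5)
Your proof is correct and follows essentially the same route as the paper: orthonormality of the $Xv_k^\ast$ via the covariance identity for (a) and (c), the relation $\Theta_X(j^\ast v^\ast)=Xv^\ast$ for (b), and for (d) the truncation $\sum_{k=1}^N\scapro{e_k}{h_j}v_k^\ast$, infinite divisibility of the finite-dimensional projections of the law of $X$, and closedness of infinitely divisible laws on $\R^n$ under weak limits. No substantive differences.
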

\begin{proof}
(a) The identity
\begin{align}\label{eq.XCov}
  E[(Xv_k^\ast)(Xv_\ell^\ast)]=\scapro{Qv_k^\ast}{v_\ell^\ast}=\scapro{e_k}{e_\ell}
\end{align}
implies that $Xv_k^\ast$ and $Xv_\ell^\ast$ are uncorrelated for $k\neq \ell$ and $E[\abs{Xv_k^\ast}^2]=1$. Thus, part (a) follows
from \eqref{eq.canZ}. This also shows that $\Theta_X$ is a well defined cylindrical random variable. \\
(b) Due to \eqref{eq.Loeve} and \eqref{eq.canZ} we have $\Theta_X(j^\ast v^\ast)=Xv^\ast$ for all $v^\ast\in V^\ast$ which establishes the claim. Part (c) follows from part (a). \\
(d) For $h_1,\dots, h_m\in H$ and  $n\in\N$ define
\begin{align*}
  u_j^{(n)}:=\sum_{k=1}^n \scapro{e_k}{h_j}v_k^\ast\qquad\text{for }j=1,\dots, m.
\end{align*}
It follows from \eqref{eq.canZ} for each $j=1,\dots, m$ that
\begin{align*}
  \Theta_X h_j=\lim_{n\to\infty} Xu_j^{(n)}
  \quad\text{in }L^2_P(\Omega;\R),
\end{align*}
which yields
\begin{align*}
 (\Theta_Xh_1, \ldots , \Theta_Xh_m)=\lim_{n\to\infty} \Big( Xu_1^{(n)}, \ldots, Xu_m^{(n)}\Big)
 \qquad\text{in probability in $\R^m$.}
\end{align*}
The probability distribution of the random vector  on the right hand side is infinitely divisible as it is given  by
$\eta\circ \pi_{u_1^{(n)},\dots, u_m^{(n)}}^{-1}$, where $\eta$ denotes the cylindrical distribution of the infinitely divisible cylindrical  random variable $X$. Consequently, the random vector on the left hand side is infinitely divisible, which shows that $\Theta_X$ is an infinitely divisible cylindrical random variable.
%(c) If $Z_X$ is a genuine random variable then $Z_Xh_n=\scapro{Z_X}{h_n}$
%converges to 0 $P$-a.s.\ for a sequence $(h_n)_{n\in\N}$ which weakly$^\ast$ sequentially converges to $0$. Since $\sup_{n\in\N}\norm{h_n}<\infty$ it follows from (a) that $(Z_Xh_n)_{n\in\N}$ is uniformly bounded, and thus it converges to 0 in $
\end{proof}

\begin{example}\label{ex.canonicalGauss}
  Assume that $X$ is a Gaussian cylindrical random variable that is $Xv^\ast$ is Gaussian for all
  $v^\ast\in V^\ast$. In this case, it  follows from \eqref{eq.XCov}
  that $(Xv_k^\ast)_{k\in\N}$ is a sequence of independent, Gaussian random variables with
  $E[\abs{Xv_k^\ast}^2]=1$, which yields for the characteristic function $\phi_{\Theta_X}$
  of $\Theta_X$:
  \begin{align}\label{eq.char-gamma}
    \phi_{\Theta_X}(h)
    =\prod_{k=1}^\infty \exp\left(-\tfrac{1}{2} \scapro{e_k}{h}^2\right)
    =\exp\left(-\tfrac{1}{2}\norm{h}^2\right)\qquad\text{for all }h\in H.
  \end{align}
Consequently, the cylindrical random variable $\Theta_X$ is distributed according to the
canonical Gaussian cylindrical  measure $\gamma$ in this case.
\end{example}

\section{Radonifying operators}

Let $H$ be a separable Hilbert space, $V$ be a separable Banach space and $\theta$  be a cylindrical probability measure on $\Z(H)$. An operator $T\in \L(H,V)$ is called {\em $\theta$-radonifying}
if the image cylindrical measure $\theta\circ T^{-1}$ extends to a probability measure on $\Borel(V)$.
If the extended measure has finite $p$-th moments, $T$ is called  $\theta$-radonifying of order $p$. We define the space
\begin{align*}
  \Rad^p (\theta):=\Rad^p_{H,V}(\theta):=\big\{T\in \L(H,V):\,
  T\text{ is $\theta$-radonifying of order $p$}\big\}.
\end{align*}
Let $\Theta$ denote a cylindrical random variable with cylindrical distribution
$\theta$. Theorem VI.3.1 in \cite{Vaketal} guarantees that an operator $T\in \L(H,V)$ is
in $\Rad^p (\theta)$ if and only if the cylindrical random variable
$T(\Theta)$ defined by
\begin{align*}
  T(\Theta)\colon V^\ast \to L^0_P(\Omega;\R),\qquad
   T(\Theta)v^\ast=\Theta(T^\ast v^\ast)
\end{align*}
is induced by a genuine random variable in $L^p_P(\Omega;V)$.
If $S$ and $T$ are in $\Rad^p(\theta)$ and $\alpha\in\R$ then we obtain for all $v^\ast\in V^\ast$
that
\begin{align*}
(\alpha S+ T)(\Theta)v^\ast=  \Theta ((\alpha S+ T)^\ast v^\ast)=  \alpha S(\Theta)v^\ast + T(\Theta)v^\ast.
\end{align*}
Since $S(\Theta)$ and $T(\Theta)$ are induced by  genuine random variables in $L^p_P(\Omega;V)$, respectively, it follows that $(\alpha S+ T)(\Theta)$ is also induced by a genuine random variable in $L^p_P(\Omega;V)$, and thus $\alpha S+T\in \Rad^p(\theta)$. For $T\in \Rad^p(\theta)$ define
\begin{align*}
  \norm{T}_{p}:=\left(\int_V \norm{v}^p \, (\theta\circ T^{-1})(dv)\right)^{1/p}.
\end{align*}
Since $\norm{T}_p^p=E[\norm{T(\Theta)}^p]$ it follows that $\norm{\cdot}_p$ defines a semi-norm on $\Rad^p(\theta)$.
We obtain a norm on $\Rad^p(\theta)$ by defining
\begin{align*}
  \norm{T}_{\Rad^p}:=\norm{T}_p+ \norm{T}_{H\to V}.
\end{align*}

\begin{theorem}\label{th.Banach}
For every cylindrical probability measure $\theta$ on $\Z(H)$ the
space $\Rad^p(\theta)$ equipped with $\norm{\cdot}_{\Rad^p}$ is a Banach space for each $p\ge 1$.
\end{theorem}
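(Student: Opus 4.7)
The plan is to start from an arbitrary $\norm{\cdot}_{\Rad^p}$-Cauchy sequence $(T_n)_{n\in\N}\subseteq \Rad^p(\theta)$ and produce a limit $T\in \Rad^p(\theta)$ together with a random variable inducing $T(\Theta)$. Since $\norm{\cdot}_{\Rad^p}$ dominates both the operator norm $\norm{\cdot}_{H\to V}$ and the seminorm $\norm{\cdot}_p$, the sequence is Cauchy in each of these. As $\L(H,V)$ is a Banach space, there exists $T\in \L(H,V)$ with $\norm{T_n-T}_{H\to V}\to 0$.

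Next, let $\Theta\colon H\to L^0_P(\Omega;\R)$ be a cylindrical random variable with cylindrical distribution $\theta$, as supplied by Section \ref{se.preliminaries}. By the characterisation recalled before the theorem, each $T_n(\Theta)$ is induced by some $Y_n\in L^p_P(\Omega;V)$, and the identity $\norm{T_n-T_m}_p^p=E[\norm{Y_n-Y_m}^p]$ shows $(Y_n)_{n\in\N}$ is Cauchy in $L^p_P(\Omega;V)$. By completeness of the Lebesgue--Bochner space, there exists $Y\in L^p_P(\Omega;V)$ with $Y_n\to Y$ in $L^p_P(\Omega;V)$.

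The decisive step is to identify $Y$ as inducing $T(\Theta)$. Fix $v^\ast\in V^\ast$. Since $T_n\to T$ in $\L(H,V)$, we have $T_n^\ast v^\ast \to T^\ast v^\ast$ in $H$, and the continuity of $\Theta\colon H\to L^0_P(\Omega;\R)$ gives
\begin{align*}
T_n(\Theta)v^\ast = \Theta(T_n^\ast v^\ast)\;\longrightarrow\;\Theta(T^\ast v^\ast)=T(\Theta)v^\ast
\qquad\text{in probability.}
\end{align*}
On the other hand $T_n(\Theta)v^\ast=\scapro{Y_n}{v^\ast}\to \scapro{Y}{v^\ast}$ in $L^p_P(\Omega;\R)$, hence also in probability, so $T(\Theta)v^\ast = \scapro{Y}{v^\ast}$ almost surely. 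This holds for every $v^\ast\in V^\ast$, which is precisely the criterion (recalled from Theorem IV.2.5 in \cite{Vaketal}) that $T(\Theta)$ is induced by the genuine random variable $Y\in L^p_P(\Omega;V)$. Therefore $T\in\Rad^p(\theta)$.

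Finally, $\norm{T_n-T}_p^p = E[\norm{Y_n-Y}^p]\to 0$, and combined with $\norm{T_n-T}_{H\to V}\to 0$ this yields $\norm{T_n-T}_{\Rad^p}\to 0$. The main obstacle is the identification step above: one must combine operator-norm convergence (to pass the limit through $T_n^\ast$) with continuity of the cylindrical random variable $\Theta$ and $L^p$-convergence of the inducing random variables, which is precisely what makes the joint norm $\norm{T}_p+\norm{T}_{H\to V}$ (rather than $\norm{T}_p$ alone) the right object to ensure completeness.
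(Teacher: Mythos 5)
Your proposal is correct and follows essentially the same route as the paper's own proof: extract the operator-norm limit $T$ from completeness of $\L(H,V)$, extract the $L^p_P(\Omega;V)$-limit $Y$ of the inducing random variables via the identity $\norm{T_m-T_n}_p=\norm{Y_m-Y_n}_{L^p_P}$, and identify $\scapro{Y}{v^\ast}=\Theta(T^\ast v^\ast)$ by combining continuity of $\Theta$ with convergence in probability. The only difference is that you also spell out the final verification $\norm{T_n-T}_{\Rad^p}\to 0$, which the paper leaves implicit.
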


\begin{proof}
Let $(T_n)_{n\in\N}$ be a Cauchy sequence in $\Rad^p(\theta)$. Denote for each $n\in\N$ by $Y_n$ the induced random variable in $L^p_P(\Omega;V)$ with probability distribution $\theta\circ T_n^{-1}$. Since $(T_n)_{n\in\N}$ is also a Cauchy sequence in $\L(H,V)$ it follows that there exists  $T\in \L(H,V)$ such that $\norm{T_n-T}_{H\to V}\to 0$ for $n\to\infty$.  Moreover, the equality $\norm{Y_m-Y_n}_{L^p_P}=\norm{T_m-T_n}_{p} $ implies that there exists a random variable $Y \in L^p_P(\Omega;V)$ such that $\norm{Y_n-Y}_{L^p_P}\to 0$ for
$n\to\infty$.
The continuity of $\Theta\colon H\to L_P^0(\Omega;\R)$ implies for every $v^\ast$ that we have
in $L_P^0(\Omega;\R)$:
\begin{align*}
  \lim_{n\to\infty} \abs{\scapro{Y_n}{v^\ast}-\Theta(T^\ast v^\ast)}
  =\lim_{n\to\infty} \abs{\Theta\big(( T_n^\ast-T^\ast)v^\ast\big)}=0.
\end{align*}
Since $\scapro{Y_n}{v^\ast}\to \scapro{Y}{v^\ast}$ in $L_P^2(\Omega;\R)$ we obtain
$\scapro{Y}{v^\ast}=\Theta(T^\ast v^\ast)$ for all $v^\ast\in V^\ast$, which completes the proof.
\end{proof}
\begin{example}
  Let $\theta$ be given by the canonical Gaussian cylindrical measure $\gamma$ on $\Z(H)$. Due to Fernique's theorem, each $\gamma$-radonifying operator is of any order $p\ge 1$. Thus, the space $\Rad^p(\gamma)$ coincides with the space of $\gamma$-radonifying operators. This class of operators is well studied, and is recently surveyed in \cite{vanNeerven10}.
\end{example}
In the special setting of Section \ref{se.infdiv} we obtain the following simplification of the norm in $\Rad^p(\theta)$ for $p\ge 2$:
\begin{proposition}
For a cylindrical random variable $X$ in $V$ with weak second moments let the cylindrical random variable  $\Theta_X$ and its cylindrical distribution $\theta_X$ be defined by \eqref{eq.canZ}.
If $p\ge 2$ then $T\in \Rad^p(\theta_X)$ satisfies
\begin{align*}
\norm{T}_{H\to V}\le  \norm{T}_p,
\end{align*}
that is the norm $\norm{\cdot}_{\Rad^p}$ is equivalent to $\norm{\cdot}_p$.
\end{proposition}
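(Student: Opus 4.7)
The plan is to exploit Lemma~\ref{le.properties}(a), which says $E[|\Theta_X h|^2] = \norm{h}^2$ for all $h \in H$, to express the Hilbert-space norm $\norm{T^\ast v^\ast}_H$ in terms of a second moment of the induced random variable, and then dominate this second moment by the $p$-th moment via Jensen's inequality.

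Concretely, let $T \in \Rad^p(\theta_X)$ and let $Y \in L^p_P(\Omega;V)$ be the genuine random variable inducing $T(\Theta_X)$, so that $\scapro{Y}{v^\ast} = \Theta_X(T^\ast v^\ast)$ for all $v^\ast \in V^\ast$. First, I would apply Lemma~\ref{le.properties}(a) with $h = T^\ast v^\ast$ to obtain
\begin{align*}
\norm{T^\ast v^\ast}_H^2 = E\big[|\Theta_X(T^\ast v^\ast)|^2\big] = E\big[|\scapro{Y}{v^\ast}|^2\big].
\end{align*}
Next, since $p \ge 2$, Jensen's inequality gives
\begin{align*}
\big(E[|\scapro{Y}{v^\ast}|^2]\big)^{1/2} \le \big(E[|\scapro{Y}{v^\ast}|^p]\big)^{1/p} \le \norm{v^\ast} \big(E[\norm{Y}^p]\big)^{1/p} = \norm{v^\ast}\, \norm{T}_p,
\end{align*}
where the last equality uses that $\norm{T}_p^p = E[\norm{T(\Theta_X)}^p] = E[\norm{Y}^p]$.

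Combining, $\norm{T^\ast v^\ast}_H \le \norm{T}_p \norm{v^\ast}$ for every $v^\ast \in V^\ast$, i.e.\ $\norm{T^\ast}_{V^\ast \to H} \le \norm{T}_p$. Since $\norm{T}_{H\to V} = \norm{T^\ast}_{V^\ast \to H}$, the inequality $\norm{T}_{H\to V} \le \norm{T}_p$ follows, and equivalence of the two norms $\norm{\cdot}_{\Rad^p}$ and $\norm{\cdot}_p$ is immediate from the definition $\norm{T}_{\Rad^p} = \norm{T}_p + \norm{T}_{H\to V}$. There is essentially no obstacle here; the content of the proposition is the conceptual observation that the isometric property of $\Theta_X$ forces the operator norm of any $\theta_X$-radonifying $T$ to be controlled by the second moment, which is then dominated by $\norm{T}_p$ as soon as $p\ge 2$.
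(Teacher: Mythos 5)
Your proof is correct and follows essentially the same route as the paper's: both rest on the isometry $E[\abs{\Theta_X h}^2]=\norm{h}^2$ from Lemma~\ref{le.properties}(a) applied to $h=T^\ast v^\ast$, followed by the bound $\abs{\scapro{Y}{v^\ast}}\le\norm{Y}\,\norm{v^\ast}$ and Jensen's (Lyapunov's) inequality to pass from the second to the $p$-th moment. The only cosmetic difference is the order of operations — you apply Jensen to the scalar $\scapro{Y}{v^\ast}$ before taking the supremum over the unit ball, while the paper first exchanges the supremum with the expectation and then applies Jensen to $\norm{Y}$ — which changes nothing of substance.
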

\begin{proof}
For $T\in \Rad^p(\theta_X)$ let $Y$ denote the induced random variable in $ L_P^p(\Omega;V)$ with probability distribution $\theta_X\circ T^{-1}$ on $\Z(V)$.
Lemma \ref{le.properties} implies:
\begin{align*}
  \norm{T}_{H\to V}^2
= \sup_{\norm{v^\ast}\le 1} \norm{T^\ast v^\ast}^2
&= \sup_{\norm{v^\ast} \le 1} E\left[\abs{\Theta_X T^\ast v^\ast}^2\right]\\
&\le E\left[ \sup_{\norm{v^\ast} \le 1} \abs{\scapro{Y}{v^\ast}}^2 \right]
= E\left[\norm{Y}^2\right]
\le\Big( E\Big[\norm{Y}^p\Big]\Big)^{2/p},
\end{align*}
which completes the proof.
\end{proof}

The following result is a straightforward conclusion of a result by Schwartz \cite{Schwartz69} and Kwapien \cite{Kwapien70}, but it shows an important class of operators which are $\theta$-radonifying.
\begin{proposition}\label{pro.abs-rad}
If $\theta$ is a cylindrical probability measure of weak order $p\ge 1$ then we have
\begin{align*}
  \Pi^p(H,V)\subseteq \Rad^p_{H,V}(\theta).
\end{align*}
\end{proposition}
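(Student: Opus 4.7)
The goal is to produce, given $T\in\Pi^p(H,V)$ and a cylindrical random variable $\Theta$ on $H$ with distribution $\theta$, a genuine $Y\in L^p_P(\Omega;V)$ satisfying $\scapro{Y}{v^\ast}=\Theta(T^\ast v^\ast)$ for every $v^\ast\in V^\ast$; by the characterization recalled right after the definition of $\Rad^p(\theta)$ this places $T$ in $\Rad^p(\theta)$. First I would fix an orthonormal basis $(e_k)_{k\in\N}$ of $H$ and form the partial sums $Y_n:=\sum_{k=1}^n \Theta(e_k)\, Te_k$. Since $\theta$ has weak $p$-th moments, the closed graph argument from Section \ref{se.preliminaries} furnishes a constant $M_p>0$ with $E\abs{\Theta(h)}^p\le M_p\norm{h}^p$ for every $h\in H$, so each $Y_n$ already lies in $L^p_P(\Omega;V)$; the task reduces to showing that $(Y_n)$ is Cauchy there and identifying its limit.

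For Cauchy-ness I would invoke the Schwartz--Kwapien input through Pietsch's factorization: identifying $H^\ast$ with $H$ via Riesz, there is a Radon probability measure $\nu$ on the unit ball $B_H$ satisfying
\begin{align*}
\norm{Tx}^p\le \pi_p(T)^p\int_{B_H}\abs{\scapro{x}{h}}^p\,\nu(dh)\qquad\text{for all }x\in H.
\end{align*}
Applying this $\omega$ by $\omega$ to $x=\sum_{k=n+1}^m \Theta(e_k)(\omega)\, e_k$ and then using Fubini (the integrand is a finite sum of products and hence jointly measurable) one obtains
\begin{align*}
E\norm{Y_m-Y_n}^p\le \pi_p(T)^p\int_{B_H} E\abs{\Theta(P_{n,m}h)}^p\,\nu(dh),
\end{align*}
where $P_{n,m}$ is the orthogonal projection onto $\mathrm{span}\{e_{n+1},\dots,e_m\}$. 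The integrand is bounded by $M_p$ and, for each fixed $h\in B_H$, tends to $0$ as $n\to\infty$ uniformly in $m>n$, since $\norm{P_{n,m}h}^2\le\sum_{k>n}\abs{\scapro{e_k}{h}}^2\to 0$. Dominated convergence against $\nu$ then yields Cauchy-ness, and hence convergence of $Y_n$ to some $Y\in L^p_P(\Omega;V)$.

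For the identification of $Y$, linearity of $\Theta$ on finite sums gives $\scapro{Y_n}{v^\ast}=\Theta(P_n T^\ast v^\ast)$, where $P_n$ denotes the projection onto $\mathrm{span}\{e_1,\dots,e_n\}$. Passing to the limit in $L^p_P(\Omega;\R)$ — the left-hand side tending to $\scapro{Y}{v^\ast}$ by the $L^p$-convergence just established, the right-hand side to $\Theta(T^\ast v^\ast)$ by continuity of $\Theta\colon H\to L^p_P(\Omega;\R)$ — closes the argument. The one non-routine step is Pietsch's factorization, which is exactly the device converting the combinatorial $p$-summing inequality into a pointwise $L^p$ domination by a single probability measure; once this is in hand, everything else is a standard $L^p$ Cauchy estimate.
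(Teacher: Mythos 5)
Your argument is correct, but it takes a genuinely different route from the paper's. The paper disposes of this proposition in two lines by citing the Schwartz--Kwapie\'n theorem (Theorem VI.5.4 and its Corollary in \cite{Vaketal}): $\Pi^p(H,V)$ coincides with the class of $p$-radonifying operators, i.e.\ operators mapping \emph{every} cylindrical measure with weak $p$-th moments to a Radon measure with finite $p$-th moment, and the claim follows by applying this black box to $\theta$. What you have written is essentially a self-contained proof of the relevant half of that theorem in the Hilbert-domain case: Pietsch domination converts the summing inequality into the estimate $E\norm{Y_m-Y_n}^p\le\pi_p(T)^p\int_{B_H}E\abs{\Theta(P_{n,m}h)}^p\,\nu(dh)$, and the weak $p$-th moments of $\theta$ (equivalently, boundedness of $\Theta\colon H\to L^p_P(\Omega;\R)$ via the closed graph theorem, as recalled in Section \ref{se.preliminaries}) let dominated convergence close the Cauchy estimate; the joint measurability needed for Fubini is clear because $\Theta(P_{n,m}h)$ is a finite linear combination of the $\Theta(e_k)$, and the final identification $\scapro{Y}{v^\ast}=\Theta(T^\ast v^\ast)$ is exactly the characterization of $\Rad^p(\theta)$ recalled after its definition. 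The trade-off: the paper's citation is shorter and the underlying theorem holds for a general Banach-space domain, whereas your argument is tied to the orthonormal-basis structure of $H$ but is self-contained modulo Pietsch's factorization and yields strictly more, namely that the series $\sum_k\Theta(e_k)\,Te_k$ converges in the norm of $L^p_P(\Omega;V)$ --- not merely weakly against functionals as in part (b) of Proposition \ref{pro.radonifying-sum} --- whenever $T$ is $p$-absolutely summing.
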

\begin{proof}
The space of $p$-absolutely summing operators $\Pi^p(H,V)$  coincides with the space of $p$-radonifying
operators; see Theorem VI.5.4 in \cite{Vaketal} for $p>1$ and Corollary in VI.5.4 in \cite{Vaketal} for $p=1$.
The space of $p$-radonifying operators are operators $T\in \L(H,V)$ such for each cylindrical measure $\eta$ on $\Z(H)$ with weak $p$-moments the image cylindrical measure $\eta\circ T^{-1}$ extends to
a measure on $\Borel(V)$ with finite $p$-moment.
\end{proof}

\begin{proposition}\label{pro.radonifying-sum}
Let $\Theta\colon H\to L_P^0(\Omega;\R)$ be a cylindrical random variable with cylindrical distribution
$\theta$.  Then for  $T\in \L(H,V)$ and $p\ge 1$ the following are equivalent:
\begin{enumerate}
\item[\rm (a)] $T\in \Rad^p(\theta)$.
  \item[\rm (b)] there exists a random variable $Y\in L_P^p(\Omega;V)$
  such that for all (some) orthonormal basis $(e_k)_{k\in\N}$ of $H$:
\begin{align*}
  \scapro{Y}{v^\ast}=\sum_{k=1}^\infty \scapro{Te_k}{v^\ast} \Theta e_k
  \quad\text{in $L^0_P(\Omega;\R)$ for all }v^\ast\in V^\ast.
\end{align*}
\end{enumerate}
\end{proposition}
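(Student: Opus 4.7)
The plan is to reduce the statement to the characterization of $\Rad^p(\theta)$ already recorded right after its definition: $T\in\Rad^p(\theta)$ precisely when the cylindrical random variable $T(\Theta)\colon v^\ast\mapsto \Theta(T^\ast v^\ast)$ is induced by some $Y\in L^p_P(\Omega;V)$, i.e.\ $\scapro{Y}{v^\ast}=\Theta(T^\ast v^\ast)$ for all $v^\ast\in V^\ast$. With this in hand the whole proposition collapses to identifying $\Theta(T^\ast v^\ast)$ with the series $\sum_{k=1}^\infty \scapro{Te_k}{v^\ast}\Theta e_k$ in $L^0_P(\Omega;\R)$.

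First I would fix an arbitrary orthonormal basis $(e_k)_{k\in\N}$ of $H$ and use the Parseval expansion
\begin{align*}
  T^\ast v^\ast=\sum_{k=1}^\infty \scapro{T^\ast v^\ast}{e_k}_H\, e_k
   =\sum_{k=1}^\infty \scapro{Te_k}{v^\ast}\, e_k \qquad\text{in }H.
\end{align*}
Since $\Theta\colon H\to L^0_P(\Omega;\R)$ is linear and continuous (by the very definition of a cylindrical random variable recalled in Section \ref{se.preliminaries}), applying $\Theta$ to the partial sums gives
\begin{align*}
 \Theta\!\left(\sum_{k=1}^n \scapro{Te_k}{v^\ast} e_k\right)
 =\sum_{k=1}^n \scapro{Te_k}{v^\ast}\,\Theta e_k
 \;\longrightarrow\; \Theta(T^\ast v^\ast)\quad\text{in }L^0_P(\Omega;\R).
\end{align*}
Thus the series $\sum_{k=1}^\infty \scapro{Te_k}{v^\ast}\Theta e_k$ always converges in $L^0_P(\Omega;\R)$ to $\Theta(T^\ast v^\ast)$, for every choice of orthonormal basis.

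With this identity the two implications are immediate. For (a)$\Rightarrow$(b): if $T\in\Rad^p(\theta)$, take the inducing $Y\in L^p_P(\Omega;V)$ with $\scapro{Y}{v^\ast}=\Theta(T^\ast v^\ast)$; the display above shows that the required representation of $\scapro{Y}{v^\ast}$ holds for \emph{every} orthonormal basis. For (b)$\Rightarrow$(a): if the representation holds for \emph{some} orthonormal basis with some $Y\in L^p_P(\Omega;V)$, the same display forces $\scapro{Y}{v^\ast}=\Theta(T^\ast v^\ast)=T(\Theta)v^\ast$ for all $v^\ast\in V^\ast$, so $T(\Theta)$ is induced by $Y$ and hence $T\in\Rad^p(\theta)$. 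This also automatically promotes the "some" in (b) to "all".

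The step that requires the most care is justifying convergence of the series in $L^0_P(\Omega;\R)$; I expect no real obstacle here since continuity of $\Theta$ on $H$ converts the norm convergence of the Parseval partial sums into convergence in probability, but one should note that without this continuity property (which is built into the definition of a cylindrical random variable) the series need not converge term‑by‑term in any stronger sense — e.g.\ it is not a priori an unconditionally convergent series in $L^p_P$, and one should resist the temptation to claim more than $L^0$-convergence.
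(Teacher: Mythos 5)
Your argument is correct and follows essentially the same route as the paper: both reduce the statement to the characterization of $\Rad^p(\theta)$ via the induced cylindrical random variable $T(\Theta)$, and then use the continuity of $\Theta\colon H\to L^0_P(\Omega;\R)$ applied to the Parseval expansion of $T^\ast v^\ast$ to identify $\Theta(T^\ast v^\ast)$ with the series $\sum_{k}\scapro{Te_k}{v^\ast}\Theta e_k$. Your closing remark about not expecting more than $L^0$-convergence is a sensible caution that the paper leaves implicit.
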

\begin{proof}
The operator $T$ is in $\Rad^p(\theta)$ if and only if the
cylindrical random variable $T(\Theta)$ is induced by a $V$-valued random variable $Y$
in $L_P^p(\Omega;V)$, that is $\Theta(T^\ast v^\ast)=\scapro{Y}{v^\ast}$ for all $v^\ast\in V^\ast$.
The continuity of $\Theta\colon H\to L_P^0(\Omega;\R)$ implies in $L_P^0(\Omega;\R)$:
\begin{align*}
 \scapro{Y}{v^\ast}=
  \Theta (T^\ast v^\ast)=
  \Theta\left(\sum_{k=1}^\infty \scapro{T^\ast v^\ast}{e_k}e_k\right)
 =  \sum_{k=1}^\infty  \scapro{v^\ast}{Te_k}\Theta e_k
\end{align*}
for all $v^\ast\in V^\ast$.
\end{proof}

\begin{remark}
If $\theta$ is the canonical Gaussian cylindrical measure $\gamma$ on $H$, then the random variables $(\Theta e_k)_{k\in\N}$ are independent and symmetric. Thus, It{\^o}-Nisio's Theorem guarantees that part (b) in Proposition
\ref{pro.radonifying-sum} is equivalent to
\begin{enumerate}
  \item[(c)] there exists a random variable $Y\in L_P^p(\Omega;V)$
  such that for all (some) orthonormal basis $(e_k)_{k\in\N}$ of $H$:
\begin{align*}
  Y=\sum_{k=1}^\infty Te_k \,\Theta e_k
  \quad\text{in $L^2_P(\Omega;V)$}.
\end{align*}
\end{enumerate}
Often this property is taken as a definition of $\gamma$-radonifying operators in the literature.
\end{remark}

\begin{theorem}\label{th.mradHilbert}
If $V$ is a separable Hilbert space and the cylindrical measure
$\theta$ has weak $p$-th moments for some $p\ge 1$  and is of finite cotype then it follows:
\begin{align*}
  \Rad^p(\theta)
  =\{T\in \L(H,V): \,T\text{ is Hilbert-Schmidt}\}
\end{align*}
\end{theorem}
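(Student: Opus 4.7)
I would verify the two inclusions separately.

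For $\{T\colon T\text{ Hilbert--Schmidt}\}\subseteq\Rad^p(\theta)$: between separable Hilbert spaces $H$ and $V$, the Hilbert--Schmidt operators coincide with $\Pi^p(H,V)$ for every $p\in[1,\infty)$ (for $p=2$ this is Schatten's theorem, and the collapse of all the $\Pi^p$-classes for Hilbert-to-Hilbert operators is a classical by-product of Grothendieck's theorem). Since $\theta$ has weak $p$-th moments, Proposition~\ref{pro.abs-rad} then yields this inclusion.

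For the converse $\Rad^p(\theta)\subseteq\{T\colon T\text{ Hilbert--Schmidt}\}$ I would exploit the finite cotype of $\theta$. The defining condition for cotype $r$ combined with a standard contrapositive argument produces a constant $c>0$ with
\[ \|h\|\le c\,\bigl(E[|\Theta h|^r]\bigr)^{1/r}\qquad\text{for every }h\in H,\]
where $\Theta\colon H\to L_P^0(\Omega;\R)$ is a cylindrical random variable with distribution $\theta$. Because cotype $r$ requires weak $r$-th moments, $r\le p$, and the monotonicity of $L^q$-norms on the probability space (together with the $L^q$-continuity of $\Theta$ whenever weak $q$-moments are available) shows that this cotype inequality persists after replacing $r$ by any exponent in $[r,p]$. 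In the favourable regime $r\le 2\le p$ one may therefore take the cotype exponent equal to $2$; then for $T\in\Rad^p(\theta)$ with associated $Y\in L_P^p(\Omega;V)$ satisfying $\scapro{Y}{v^\ast}=\Theta(T^\ast v^\ast)$, and an orthonormal basis $(f_j)_{j\in\N}$ of $V$, applying this inequality to $h=T^\ast f_j$ and summing over $j$ yields
\[ \sum_{j=1}^\infty \|T^\ast f_j\|^2\le c^2 \sum_{j=1}^\infty E\bigl[|\scapro{Y}{f_j}|^2\bigr] = c^2\,E\bigl[\|Y\|^2\bigr]\]
by Fubini and Parseval in the Hilbert space $V$. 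Lyapunov's inequality gives $E[\|Y\|^2]\le(E[\|Y\|^p])^{2/p}<\infty$, so by Parseval $\|T\|_{\mathrm{HS}}^2=\sum_j\|T^\ast f_j\|^2<\infty$ and $T$ is Hilbert--Schmidt.

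\textbf{Main obstacle.} The argument is cleanest in the regime $r\le 2\le p$, where cotype $2$ is accessible through monotonicity. The delicate cases are (i) a cotype index $r>2$, where the pointwise inequality $\sum_j|\scapro{Y}{f_j}|^r\le\|Y\|^r$ produces only the weaker conclusion $\sum_j\|T^\ast f_j\|^r<\infty$, which does not imply the Hilbert--Schmidt condition $\sum_j\|T^\ast f_j\|^2<\infty$ when $r>2$; and (ii) the regime $p<2$, where no cotype exponent $\ge 2$ is available at all, since cotype $r$ forces $r\le p<2$ and Minkowski's integral inequality in $L^{r/2}$ then runs the wrong way. Resolving these cases probably requires either a truncation of $Y$ or a structural consequence of finite cotype on Hilbert targets---for example that $\theta\circ T^{-1}$ has trace-class covariance in $V$---and I expect this reduction to the clean regime to be where the technical effort of the proof lies.
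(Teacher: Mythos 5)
Your first inclusion is exactly the paper's: between separable Hilbert spaces the Hilbert--Schmidt operators coincide with the $p$-absolutely summing (equivalently, $p$-radonifying) operators for every $p\ge 1$, and Proposition \ref{pro.abs-rad} does the rest. Your partial argument for the converse is also sound in the regime it covers: the contrapositive of the cotype condition does produce a constant $c$ with $\norm{h}\le c\left(\E{\abs{\Theta h}^r}\right)^{1/r}$, and when the exponent can be pushed to $2\le p$ the orthonormal-basis summation gives $\sum_j\norm{T^\ast f_j}^2\le c^2\,\E{\norm{Y}^2}<\infty$, i.e.\ $T$ is Hilbert--Schmidt.

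However, the obstacle you flag at the end is a genuine gap, and it is precisely where the paper's proof does its work by a different mechanism. The paper does not sum the cotype inequality over an orthonormal basis at all; it invokes Theorem VI.5.9 of \cite{Vaketal}, a Schwartz-type duality theorem asserting that if a cylindrical measure of cotype $q$ is mapped by $T$ to a Radon measure, then $T^\ast$ is $q$-absolutely summing. The crucial difference is that $q$-absolute summability is tested against \emph{all} weakly $q$-summable sequences in $V$, not merely orthonormal ones, and between Hilbert spaces the classes $\Pi^q$ collapse to the Hilbert--Schmidt class for \emph{every} $q\in[1,\infty)$ --- the same collapse you already used for the first inclusion. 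Hence a cotype index $q>2$ causes no difficulty on the paper's route, whereas your orthonormal-basis estimate only delivers $\sum_j\norm{T^\ast f_j}^q<\infty$, which is strictly weaker than $q$-absolute summability and does not imply the Hilbert--Schmidt condition when $q>2$. The case $p<2$ is likewise unproblematic for the paper: it only needs to arrange the cotype index to satisfy $q\ge 1$ (using that cotype $q\le p$ upgrades to cotype $p$), never $q\ge 2$. Without this duality input, or an equivalent structural result in its place, your reduction to the ``clean regime'' cannot be completed, so the proposed proof of the inclusion $\Rad^p(\theta)\subseteq\{T:\,T\text{ Hilbert--Schmidt}\}$ is incomplete.
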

\begin{proof}
It is well known that in Hilbert spaces the class of $p$-radonifying operators (see proof of Proposition \ref{pro.abs-rad}) coincides with the space of Hilbert-Schmidt operators. Thus, the class of Hilbert-Schmidt operators is a subset of $\Rad^p(\theta)$.

Let $T$ be in $\Rad^p(\theta)$. Then the cylindrical
measure $\theta\circ T^{-1}$ extends to a Radon measure of order $p$ and $\theta\circ
(T^{\ast\ast})^{-1}$ is a $\sigma(V^{\ast\ast},V^\ast)$-Radon measure. Denote the cotype of $\theta$ by $q\in [0,\infty)$. If $q\le p$ then $\theta$ is also of cotype $p$, and thus, we can assume that $q\ge 1$.
Theorem VI.5.9 in \cite{Vaketal} implies that $T^{\ast}$ is $q$-absolutely summing, which is equivalent to the fact that $T^{\ast}$ is Hilbert-Schmidt, since $H$ and $V$ are Hilbert spaces.
%last from Diestel et al page 85
\end{proof}

\begin{remark}
  If $\theta$ is a genuine probability measure with $p$-th moments then each operator in $\L(H,V)$ is in $\Rad^p(\theta)$ and Theorem \ref{th.mradHilbert} cannot be true. This case is excluded since in this case $\theta$ cannot be of finite cotype: if $(h_n)_{n\in\N}$ is a sequence in $H$ which converges sequentially weakly to $0$ then Lebesgue's theorem implies
\begin{align*}
\lim_{n\to\infty}  \int_{H}\scapro{h_n}{h}^p\, \theta(dh)=0.
\end{align*}
But if $H$ is infinite dimensional then we can choose $\norm{h_n}=1$ for all $n\in\N$.
\end{remark}

{\bf Stable cylindrical measures:}
A cylindrical measure $\theta$ on $\Z(H)$ is called {\em stable of order $\alpha\in (0,2]$} if there exists a measure space $(S,\mathcal{S},m)$ and a linear bounded operator $F\colon H\to L_m^\alpha(S;\R)$ such that the characteristic function $\phi_\theta$ of
$\theta$ obeys
\begin{align}\label{eq.char-stable}
  \phi_\theta\colon H\to \C,\qquad \phi_\theta(h)=\exp\left(-\norm{Fh}_{L_m^\alpha}^\alpha\right).
\end{align}
The characteristic function of $\theta\circ T^{-1}$ for an arbitrary operator $T\in \L(H,V)$ is given by
\begin{align*}
    \phi_{\theta\circ T^{-1}}\colon V^\ast \to \C,\qquad \phi_{\theta\circ T^{-1}}(v^\ast)=\exp\left(-\norm{(FT^\ast)v^\ast}_{L_m^\alpha}^\alpha\right).
\end{align*}
It follows that $T$ is $\theta$-radonifying if and only if $FT^\ast \in \Lambda_\alpha(V^\ast, L^\alpha_m)$, where
\begin{align*}
&\Lambda_\alpha(V^\ast, L^\alpha_m):=
\Big\{R\in \L(V^\ast,L_m^\alpha):\, v^\ast\mapsto \exp(-\norm{R v^\ast}^\alpha_{L_m^\alpha}) \\
&\hspace*{5cm} \text{ is the characteristic function of a Radon measure on $V$} \Big\}.
\end{align*}
By taking into account that an $\alpha$-stable measure has finite $r$-th moments for all
$r<\alpha$ according to Theorem 3.2 in \cite{Acosta75}, we obtain for each $p<\alpha$ that $T$ is in $\Rad^p(\theta)$ if only if $FT^\ast \in \Lambda_\alpha(V^\ast, L^\alpha_m)$. The spaces $\Lambda_\alpha(V^\ast, L^\alpha_m)$ are surveyed  in \cite[Se.7.8]{Linde}, however an explicit description is only known in a few case.
A case which can be easily described is the following:
\begin{example}
  Assume that $V=\ell^q$ for some $q\in [2,\infty)$ and  $L_m^\alpha(S;\R)=\ell^\alpha$
  for some $\alpha<q^\prime$ where $q^\prime:=q/(q-1)$. Let $\theta$ be an $\alpha$-stable measure on the Hilbert space $H$ with characteristic function of the form \eqref{eq.char-stable}.
    Then an operator $T\in \L(H,V)$ is in $\Rad^p(\theta)$ for $p<\alpha$ if and only if
\begin{align*}
  \sum_{k=1}^\infty  \left( \sum_{j=1}^\infty
   \abs{\scapro{FT^\ast e_j}{e_k}}^{q^\prime} \right)^{\tfrac{\alpha}{q^{\prime}}}<\infty,
\end{align*}
where $(e_k)_{k\in\N}$ denotes the canonical Schauder basis for the spaces of
sequences.
\end{example}

\begin{example}
  Assume that $V$ is given by some $L^q$ space for $q\in [2,\infty)$ and
  $\theta$ is an $\alpha$-stable cylindrical measure on the Hilbert space $H$ with characteristic function of the form \eqref{eq.char-stable}. Then an operator
$T\in \L(H,V)$ is in $\Rad^p(\theta)$ for $p<\alpha$ if and only if
$FT^\ast $ is $r$-absolutely summing for any $r\in (0,q^\prime)$, i.e.\ $FT^\ast\in \Pi^r(L^{q^\prime},L_m^\alpha)$; see Proposition 7.8.7 in \cite{Linde}.
\end{example}
Stable cylindrical probability measures might be considered as a subset of infinitely divisible cylindrical measures with elements which are the most similar ones to the canonical Gaussian cylindrical measure $\gamma$. Nevertheless, many properties known for $\gamma$-radonifying operators do not hold for radonifying operators of stable cylindrical measures. One of these is the ideal property which is true for $\gamma$-radonifying operators:
let $H$ and $H^\prime$ be Hilbert spaces and $V$ and $V^\prime$ be Banach spaces. Then if $T\in \Rad_{H,V}^2(\gamma)$, $S_1\in \L(H^\prime, H)$ and $S_2\in \L(V,V^\prime)$
then $S_2TS_1\in \Rad_{H^\prime, V^\prime}^2(\gamma)$. This result can be found in \cite{vanNeerven10}. However, already for stable cylindrical measures it is known that
the ideal property is not satisfied any more: for $q>2$ there exists
an operator $T\in \Rad^p_{L^p,L^q}(\theta)$ and $S\in \L(L^p,L^p)$ such that $TS$ is not in $\Rad^p_{L^p,L^q}(\theta)$; see \cite{Linde} for this result. \\

{\bf Compound Poisson cylindrical measures:}
 a {\em compound Poisson cylindrical distribution} (see Example 3.5 in \cite{ApplebaumRiedle}) is an infinitely divisible cylindrical measure  $\theta$ on $\Z(H)$ with characteristic function
\begin{align}\label{eq.charcomppoisson}
\phi_{\theta}:H\to\C,\qquad
  \phi_{\theta}(h)=\exp\left(c\int_H \left(e^{i\scapro{h}{g}}-1\right)\, \nu(dg)\right),
\end{align}
where $\nu$ is a cylindrical probability measure on $\Z(H)$ and $c>0$ is a constant.
It follows from the finite dimensional theory of infinite divisible distributions that
$\theta$  has weak second moments if and only if $\nu$ has weak second moments.

Equivalently, one can introduce a compound Poisson cylindrical distribution by cylindrical random variables.
Let $X_1, X_2,\dots $ be independent, cylindrical random variables in $H$ with identical cylindrical distribution $\nu$ and let $N$ be an independent, integer-valued Poisson distributed random variable with intensity $c>0$, all defined on the probability space $(\Omega,\A,P)$. Then
\begin{align*}
 Y\colon H\to L^0_P(\Omega;\R),\qquad
  Yh:=\begin{cases}
    0, & \text{if }N=0,\\
   X_1h+\dots +X_{N}h, &\text{else,}
  \end{cases}
\end{align*}
defines a cylindrical random variable $Y$ with a characteristic function which is of the form \eqref{eq.charcomppoisson}.

\begin{theorem}\label{th.comppoisson}
For a compound Poisson cylindrical distribution $\theta$ with characteristic function \eqref{eq.charcomppoisson} it follows for each $p\ge 1$ that
   \begin{align*}
     \Rad^p (\theta)= \Rad^p(\nu).
   \end{align*}
\end{theorem}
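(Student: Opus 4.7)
The plan starts with the observation that for any $T \in \L(H, V)$, substituting $T^\ast v^\ast$ into \eqref{eq.charcomppoisson} yields
\begin{align*}
\phi_{\theta \circ T^{-1}}(v^\ast) = \exp\left(c \int_V (e^{i\scapro{v}{v^\ast}} - 1)\, (\nu \circ T^{-1})(dv)\right),
\end{align*}
so $\theta \circ T^{-1}$ is itself the compound Poisson cylindrical measure on $\Z(V)$ built from $\nu \circ T^{-1}$ with intensity $c$. Hence both memberships $T \in \Rad^p(\theta)$ and $T \in \Rad^p(\nu)$ are of the form ``this cylindrical measure extends to a Radon measure of order $p$ on $V$'', and it suffices to show that these two Radon extensions exist simultaneously.

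The inclusion $\Rad^p(\nu) \subseteq \Rad^p(\theta)$ I would prove by explicit construction. Starting from a Radon extension $\mu$ of $\nu \circ T^{-1}$ with $\int_V \norm{v}^p\,\mu(dv) < \infty$, I take i.i.d.\ $V$-valued random variables $Z_1, Z_2, \dots$ with common law $\mu$, independent of a Poisson$(c)$-variable $N$, and form
\begin{align*}
Y^T := \sum_{k=1}^N Z_k \qquad (\text{empty sum} = 0).
\end{align*}
A direct computation via independence shows that the characteristic function of $Y^T$ equals $\phi_{\theta \circ T^{-1}}$, so the law of $Y^T$ radonifies $\theta \circ T^{-1}$. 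The moment estimate $\EE[\norm{Y^T}^p] \le \EE[N^p]\,\EE[\norm{Z_1}^p] < \infty$ would then follow from the convexity inequality $\norm{z_1 + \dots + z_n}^p \le n^{p-1} \sum_{k=1}^n \norm{z_k}^p$ (valid for $p \ge 1$), conditioning on $N$, and the finiteness of all Poisson moments.

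For the reverse inclusion $\Rad^p(\theta) \subseteq \Rad^p(\nu)$, which I expect to be the main obstacle, I would realise $\theta$ explicitly through i.i.d.\ cylindrical random variables $X_1, X_2, \dots$ with cylindrical distribution $\nu$, independent of a Poisson$(c)$-variable $N$, on one common probability space $(\Omega,\A,P)$, so that $Yh := \sum_{k=1}^N X_k h$ has cylindrical distribution $\theta$. The hypothesis $T \in \Rad^p(\theta)$ then provides $Y^T \in L^p_P(\Omega; V)$ with $\scapro{Y^T}{v^\ast} = Y(T^\ast v^\ast)$ $P$-a.s.\ for every $v^\ast \in V^\ast$. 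The key trick is to condition on $A := \{N = 1\}$, an event of positive probability $ce^{-c}$: on $A$ the identity $Y(T^\ast v^\ast) = X_1(T^\ast v^\ast)$ holds, while the independence of $X_1$ from $N$ ensures that under $P_A := P(\cdot \mid A)$ the cylindrical distribution of $X_1$ is still $\nu$. Consequently $Y^T$, viewed on $(\Omega, \A, P_A)$, is a genuine $V$-valued random variable that induces the cylindrical random variable $T(X_1)$, so $\nu \circ T^{-1}$ extends to the law of $Y^T$ under $P_A$; the bound $\EE_{P_A}[\norm{Y^T}^p] \le P(A)^{-1}\EE_P[\norm{Y^T}^p] < \infty$ completes the argument. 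The only genuinely non-routine ingredient is this conditioning device that ``unfolds'' a Radonified compound Poisson sum into a single summand using the independence of $X_1$ and $N$; everything else is a matter of identifying characteristic functions and estimating Poisson moments.
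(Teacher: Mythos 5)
Your proof is correct, and the two directions deserve separate comments. The inclusion $\Rad^p(\nu)\subseteq\Rad^p(\theta)$ is essentially the paper's argument in probabilistic clothing: the paper writes the extension directly as the measure $e^{-c}\sum_{k\ge 0} c^k(\nu\circ T^{-1})^{\ast k}/k!$ and bounds its $p$-th moment via Minkowski's inequality, which is exactly the measure-level version of your compound sum $\sum_{k=1}^N Z_k$ and the estimate $\EE[N^p]\,\EE[\norm{Z_1}^p]$. The converse inclusion, however, is where you genuinely depart from the paper. There, the author uses L\'evy--Khintchine structure theory: $\theta\circ T^{-1}$ extends to an infinitely divisible Radon measure $\mu$ with L\'evy measure $\xi$, the uniqueness of cylindrical L\'evy measures (from \cite{Riedle11}) forces $\xi=c(\nu\circ T^{-1})$ on $\Z(V)$, so $c^{-1}\xi$ is the desired Radon extension, and the $p$-th moment is extracted by keeping only the $k=1$ term in the expansion $\EE[\norm{Y}^p]=\sum_{k\ge1}\EE[\norm{X_1+\dots+X_k}^p]\,c^k e^{-c}/k!$. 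Your conditioning on $\{N=1\}$ replaces this structural input by an elementary device: it needs only the equivalence between extendability of an image cylindrical measure and the existence of an inducing random variable (Theorems IV.2.5 and VI.3.1 in \cite{Vaketal}, which hold on any realization), the domination $P(\cdot\mid N=1)\le (ce^{-c})^{-1}P$ for the moment bound, and the independence of $X_1$ and $N$, which guarantees that the cylindrical law of $X_1$ under the conditioned measure is still $\nu$. All of these steps check out. What you lose relative to the paper is the explicit identification of the extension of $\nu\circ T^{-1}$ as $c^{-1}$ times the L\'evy measure of $\theta\circ T^{-1}$, which the paper obtains as a by-product; what you gain is a proof that is self-contained at the level of this article and does not rely on the uniqueness theory for cylindrical L\'evy measures.
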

\begin{proof}
  Let $T\in \Rad^p (\theta)$. Then $\mu:=\theta\circ T^{-1}$ is an infinitely divisible  measure on $\Borel(V)$ with $p$-th moment.
  If $\xi$ denotes the L\'evy measure of $\mu$ then it follows that $\xi=c(\nu \circ T^{-1})$ on $\Z(V)$ due to the uniqueness of cylindrical L\'evy measures. Thus, the image cylindrical measure $\nu\circ T^{-1}$ extends to the  probability measure $c^{-1}\xi$.  Let $Y$ be a $V$-valued random variable with distribution $\mu$ and $(X_k)_{k\in\N}$ a family of
  independent, $V$-valued random variables with distribution $c^{-1}\xi$. It follows that
  \begin{align*}
    E\left[\norm{X_1}^p\right]
    \le \frac{e^c}{c}\sum_{k=1}^\infty E\left[\norm{X_1+\dots +X_k}^p\right] \frac{c^k}{k!}e^{-c}
    = \frac{e^c}{c} E\left[\norm{Y}^p\right]<\infty,
  \end{align*}
i.e. the Radon measure $c^{-1}\xi$ has moments of order $p$ which shows $T\in \Rad^p(\nu)$.

If we assume $T\in \Rad^p(\nu)$ then $\nu\circ T^{-1}$ is a probability measure on $\Borel(V)$ and
\begin{align*}
 \mu:\Borel(V)\to [0,1], \qquad
  \mu(C):=e^{-c} \sum_{k=0}^\infty \frac{c^k(\nu\circ T^{-1})^{\ast k}(C)}{k!}
\end{align*}
defines a  probability measure on $\Borel(V)$ with characteristic function
\begin{align*}
\phi_\mu\colon V^\ast\to\C,\qquad  \phi_\mu(v^\ast)= \exp\left(c \int_{V} \left(e^{i\scapro{v}{v^\ast}}-1\right) \, (\nu\circ T^{-1})(dv)\right),
\end{align*}
see \cite[Pro.5.3.1]{Linde}. Since $\phi_{\mu}=\phi_{\theta\circ T^{-1}}$ it follows that $\theta\circ T^{-1}$ extends to
the Radon measure $\mu$ on $\Borel(V)$. As before, let $Y$ and $X_1,X_2,\dots $ denote independent random variables with distributions $\mu$ and $\nu\circ T^{-1}$. The  measure $\mu$ has $p$-th moments since Minkowski's inequality implies
\begin{align*}
  E\left[\norm{Y}^{p}\right]
= \sum_{k=1}^\infty  E\left[\norm{X_1+\dots + X_k}^p\right]
 \frac{c^k}{ k!} e^{-c}
\le \sum_{k=1}^\infty k^p E\left[\norm{X_1}^p\right]
 \frac{c^k}{ k!} e^{-c}
<\infty,
\end{align*}
which shows that  $T\in \Rad^p(\theta)$.
\end{proof}

\begin{example} (Cylindrical normally distributed jumps)\label{ex.cylGauss}\\
Models of share prices perturbed by a discontinuous noise with normally distributed jumps are considered in Financial Mathematics from its very early times; see for example the work \cite{Merton} by Merton. Accordingly, let $\nu$ be the canonical Gaussian cylindrical measure $\gamma$ and let $c>0$ be a constant.
Then the  compound Poisson cylindrical distribution $\theta$   with characteristic function  \eqref{eq.charcomppoisson}
obeys
\begin{align*}
    \Rad^p(\theta)=\Rad^p(\gamma).
\end{align*}
\end{example}

\section{Application: Wiener integrals}\label{se.Application}

In this section we apply the theory of radonifying operators developed above in order to introduce Wiener integrals with respect to a L\'evy process $L$ with weak second moments on a separable Banach space $U$. In fact, the same approach can be applied if $L$ is only a cylindrical L\'evy process, but we want to avoid any more technical complications here; see
\cite{Riedle14b} for details.

Recall that the L\'evy process $L$ can be decomposed into $L(t)=b+ W(t)+ M(t)$ for all $t\ge 0$, where $b\in U$ and $W$ is a Wiener process with a covariance operator $C\in \L(U^\ast,U)$  and $M$ is a L\'evy process with weak second moments and a L\'evy measure $\mu$. The Wiener integrals with respect to the Wiener process $W$ are developed in the publications \cite{BrzezniakvanNeerven00} and \cite{vanNeervenWeis} with many sophisticated refinements and applied to the stochastic Cauchy problem. Our rather simplified presentation  below for integration with respect to $W$ illustrates the core idea in the approach developed in \cite{vanNeervenWeis}. In our work \cite{OnnoMarkus}, we extend the approach in \cite{vanNeervenWeis} to develop a Wiener integral with respect to a martingale-valued measure. By the theory developed here,
we are able to relate  this integral to $\theta$-radonifying operators.

We begin with defining a Wiener integral with respect to the discontinuous martingale $M$ with L\'evy measure $\mu$. For $\rho:=\lambda\otimes\mu$, where $\lambda$ denotes the Lebesgue  measure on $[0,T]$, define $H_M:=L^2_\rho([0,T]\times U;\R)$. Let $V$ denote another separable Banach space and let $F\colon [0,T]\to \L(U,V)$ be a function satisfying
\begin{align}\label{eq.F-weakM2}
  \scapro{F(\cdot)\cdot}{v^\ast}\in H_M
  \qquad\text{for all }v^\ast\in V^\ast.
\end{align}
Then one can define a cylindrical random variable by
\begin{align*}
  I_M\colon V^\ast\to L^2_P(\Omega;\R),\qquad
   I_M v^\ast=\int_0^T F^\ast(s)v^\ast\, dM(s).
\end{align*}
Since the integrand is $U^\ast$-valued and $M$ has weak second moments,  the integral can be easily defined by following an It{\^o} approach, see e.g.\ \cite{Riedle14}, or by the approach of M\'etivier and Pellaumail in \cite{Metivier}, or as introduced by Rosi\'nski  in \cite{Rosinski87}. In all cases, it follows that $I_M $ is an infinitely divisible cylindrical random variable with weak second moments. The covariance operator of $I_M$ is given by
\begin{align*}
  Q_M\colon V^\ast\to V, \qquad
   \scapro{Q_Mv^\ast}{w^\ast}=\int_{[0,T]\times U} \scapro{F(s)u}{v^\ast}\scapro{F(s)u}{w^\ast}\,\rho(ds,du).
\end{align*}
The mapping $Q_M$ is $V$-valued and not only $V^{\ast\ast}$-valued, since Pettis' measurability theorem guarantees due to \eqref{eq.F-weakM2} that $(t,u)\mapsto F(t)u$ is strongly measurable. The covariance operator
$Q_M$ can be factorised by
\begin{align*}
 j_M\colon H_M\to V,\qquad \scapro{j_M f}{v^\ast}
  :=\int_{[0,T]\times U}\scapro{F(s)u}{v^\ast} f(s,u)\,\rho(ds,du).
\end{align*}
Since the adjoint operator is given by $j_M^\ast w^\ast = \scapro{F(\cdot)\cdot}{w^\ast}
=F^\ast(\cdot)w^\ast$ for all $w^\ast\in V^\ast$ it follows that
$Q_M=j_Mj_M^\ast$, and thus $H_M=L^2_\rho([0,T]\times U;\R)$ is established as
the reproducing kernel Hilbert space of $Q_M$. Define the cylindrical random variable
\begin{align*}
  \Theta_M\colon H_M\to L^2_P(\Omega;\R),
  \qquad \Theta_M f=\int_{[0,T]\times U} f(s,u)\, M(ds,du),
\end{align*}
and let $\theta_M$ denote the cylindrical distribution of $\Theta_M$.
Let $(f_k)_{k\in\N}$ be an orthonormal basis of $H_M$ and choose
$v_k^\ast\in V^\ast$ such that $f_k=j_M^\ast v_k^\ast$.
By continuity of $\Theta_M$ it follows for all $v^\ast\in V^\ast$ that
we have in $L^2_P(\Omega;\R)$:
\begin{align*}
I_M v^\ast = \Theta_M (j_M^\ast v^\ast) =
\Theta_M \left(\sum_{k=1}^\infty \scapro{f_k}{j_M^\ast v^\ast}f_k\right)
=\sum_{k=1}^\infty \scapro{j_M f_k}{v^\ast} \Theta_M f_k
=\sum_{k=1}^\infty \scapro{j_M f_k}{v^\ast} I_M v_k^\ast.
\end{align*}
In summary, we have explicitly derived the setting of Section \ref{se.infdiv}: for the cylindrical random variable $I_M$ we derived the reproducing kernel Hilbert space $H_M$
of its covariance operator $Q_M$ with embedding $j_M\colon H_M\to V$. In addition, we constructed the cylindrical random variable $\Theta_M$ in $H_M$, which is based on the Karhunen-Lo{\`e}ve representation of $I_M$ according to \eqref{eq.Loeve} and which satisfies $j_M(\Theta_M)=I_M$.

It follows from Lemma \ref{le.properties} that $\Theta_M$ is an infinitely divisible cylindrical random variable, and by approximating $f\in H_M$ by step functions, the characteristic function of $\Theta_M$ is given by
\begin{align*}
 \phi_{\Theta_M}\colon H_M\to \C, \qquad \phi_{\Theta_M}(f)= \exp\left(\int_{H_M} \left(e^{i\scapro{f}{g}}-1-i\scapro{f}{g}\right)
  \, \nu(dg)\right),
\end{align*}
where $\nu$ is a cylindrical measure on $\Z(H_M)$ satisfying
$\nu\circ (\scapro{\cdot}{f})^{-1}=\rho\circ f^{-1}$ for all $f\in H_M$.

Since  the cylindrical distribution of $I_M$ equals $\theta_M\circ j_M^{-1}$ according to Lemma \ref{le.properties}, there exists a random variable $Y_M\in L^0_P(\Omega;V)$ obeying
\begin{align}\label{eq.int-M}
    \scapro{Y_M}{v^\ast}= \int_0^T F^\ast(s)v^\ast\,M(ds)
    \qquad\text{for all }v^\ast\in V^\ast,
\end{align}
if and only if $j_M\colon H_M\to V$ is $\theta_M$-radonifying.

The same approach can be applied to introduce the stochastic integral with respect to the Wiener process $W$
with covariance operator $C$. Let $K$ denote the reproducing kernel Hilbert space of $C$ with embedding $i_C\colon K\to U$, i.e.\ $C=i_Ci_C^\ast$. For functions $F\colon [0,T]\to \L(U,V)$ satisfying
\begin{align}\label{eq.F-weakW2}
   i_C^\ast F^\ast(\cdot)v^\ast\in L^2([0,T]; K)
  \qquad\text{for all }v^\ast\in V^\ast,
\end{align}
one can define a cylindrical random variable by
\begin{align*}
  I_W\colon V^\ast\to L^2_P(\Omega;\R),\qquad
   I_W v^\ast=\int_0^T F^\ast(s)v^\ast\, dW(s).
\end{align*}
The covariance operator of $I_W$ is given by
\begin{align*}
  Q_W\colon V^\ast\to V,\qquad
  \scapro{Q_Wv^\ast}{w^\ast}=\int_0^T \scapro{i_C^\ast F^\ast(s)v^\ast}{i_C^\ast F^\ast(s)w^\ast}\, ds.
\end{align*}
 The covariance operator $Q_W$ can be factorised through the Hilbert space
$H_W:=L^2([0,T];K)$, and the embedding is given by
\begin{align*}
  j_W\colon H_W\to V, \qquad
  \scapro{j_Wf}{v^\ast}=\int_0^T \scapro{i_C^\ast F^\ast(s)v^\ast}{f(s)}\, ds,
\end{align*}
with adjoint operator $j_W^\ast v^\ast= i_C^\ast F^\ast(\cdot)v^\ast$. If $\gamma$ denotes the canonical Gaussian cylindrical measure on $H_W$ it follows that $\gamma\circ j_W^{-1}$ is a cylindrical Gaussian distribution with covariance operator $j_Wj_W^\ast$, that is
$\gamma\circ j_W^{-1}$ coincides with the cylindrical distribution of $I_W$. Consequently, we obtain that there exists a random variable $Y_W\in L^0_P(\Omega;V)$ obeying
\begin{align*}
  \scapro{Y_W}{v^\ast}=\int_0^T F^\ast(s)v^\ast\, dW(s) \qquad \text{for all }v^\ast\in V^\ast,
\end{align*}
if and only if $j_W\colon H_W\to V$ is $\gamma$-radonifying.

Finally, let $F\colon [0,T]\to \L(U,V)$ be a function obeying \eqref{eq.F-weakM2},
\eqref{eq.F-weakW2} and
\begin{align}\label{eq.F-weakPettis}
  \scapro{F^\ast(\cdot)v^\ast}{b}\in L^1([0,T];\R)\qquad\text{for all $v^\ast \in V^\ast$.}
\end{align}
Then one can define for each $A\in\Borel([0,T])$ a cylindrical random variable
$I_A\colon V^\ast \to L^2_P(\Omega;\R)$   by
\begin{align}
I_A v^\ast
  = \int_0^T \1_A(s) \scapro{F^\ast(s)v^\ast}{b} \,ds +   \int_0^T \1_A(s) F^\ast(s)v^\ast\,dW(s) + \int_0^T \1_A(s) F^\ast(s)v^\ast \,dM(s).
\end{align}
The function $F$ is called {\em stochastically integrable  with respect to $L$} if and only if for each $A\in\Borel([0,T])$ there exists a random variable $Y_A\in L^0_P(\Omega;V)$ such that
\begin{align}\label{eq.def-int}
  \scapro{Y_A}{v^\ast}=I_Av^\ast \qquad \text{for all }v^\ast\in V^\ast.
\end{align}
By the derivation above one obtains the following result:
\begin{theorem}
A function $F\colon [0,T]\to L(U,V)$ satisfying \eqref{eq.F-weakM2},
\eqref{eq.F-weakW2} and \eqref{eq.F-weakPettis}
is stochastically integrable with respect to $L(\cdot)=b+W(\cdot)+M(\cdot)$  if and only if the following are satisfied:
  \begin{enumerate}
    \item[\rm (i)] $F(\cdot)b\colon [0,T]\to V$ is Pettis integrable;
    \item[\rm (ii)] $j_W\colon H_W\to V$ is $\gamma$-radonifying;
    \item[\rm  (iii)] $j_M\colon H_M\to V$ is $\theta_M$-radonifying.
  \end{enumerate}
\end{theorem}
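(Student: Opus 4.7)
The plan is to exploit the L\'evy--It\^o decomposition $L=b+W+M$, in which $W$ and $M$ are independent, so that for every $A\in\Borel([0,T])$ the cylindrical integrand splits as
\begin{align*}
   I_A v^\ast = I^{(b)}_A v^\ast + I^{(W)}_A v^\ast + I^{(M)}_A v^\ast,
\end{align*}
with $I^{(b)}_A$ deterministic and $I^{(W)}_A$, $I^{(M)}_A$ independent cylindrical random variables of the form analysed in the two derivations preceding the theorem. The task then becomes to match the existence of a $V$-valued representer of the sum to the existence of a $V$-valued representer of each summand.

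For the sufficient direction I would fix $A\in\Borel([0,T])$ and construct the three representers separately. Pettis integrability of $F(\cdot)b$ on $[0,T]$ yields Pettis integrability of $\1_A F(\cdot)b$, producing some $y^{(b)}_A\in V$ representing the drift. For the Wiener piece, multiplication by $\1_A$ is a bounded operator on $H_W$, and the ideal property of $\gamma$-radonifying operators gives that $j_W\circ M_{\1_A}$ is again $\gamma$-radonifying, whence the derivation preceding the theorem produces $Y^{(W)}_A\in L^0_P(\Omega;V)$. For the martingale piece the ideal property may fail in $\Rad^p(\theta_M)$; instead I would argue at the level of L\'evy measures, using that the L\'evy measure on $V$ of the Radon extension of $\theta_M\circ j_M^{-1}$ is the image of $\rho=\lambda\otimes\mu$ under $(s,u)\mapsto F(s)u$. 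Its restriction to the image of $A\times U$ is still a Radon L\'evy measure on $V$, so it generates an infinitely divisible Radon measure whose characteristic function agrees with $\phi_{I^{(M)}_A}$, yielding $Y^{(M)}_A$. Setting $Y_A:=y^{(b)}_A+Y^{(W)}_A+Y^{(M)}_A$ then verifies stochastic integrability.

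For necessity, I would take $A=[0,T]$ and let $Y:=Y_{[0,T]}\in L^0_P(\Omega;V)$. Independence of $W$ and $M$ factorises
\begin{align*}
  \phi_Y(v^\ast) = \exp\!\left(i\int_0^T\scapro{F^\ast(s)v^\ast}{b}\,ds\right)\phi_{I_W}(v^\ast)\phi_{I_M}(v^\ast),
\end{align*}
displaying $P\circ Y^{-1}$ as the convolution of a Dirac mass, a centred Gaussian, and a centred pure-jump infinitely divisible distribution. By the uniqueness of the L\'evy--It\^o decomposition of infinitely divisible Radon measures on the separable Banach space $V$, each of the three factors is itself the characteristic function of a Radon measure on $V$. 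Matching against the constructions preceding the theorem gives (i)--(iii): the drift factor being Fourier--Stieltjes of a Dirac on $V$ is precisely Pettis integrability of $F(\cdot)b$; using Lemma~\ref{le.properties}(b), the Gaussian and pure-jump factors correspond to Radon extensions of $\gamma\circ j_W^{-1}$ and $\theta_M\circ j_M^{-1}$, i.e.\ to $\gamma$-radonifiability of $j_W$ and $\theta_M$-radonifiability of $j_M$.

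The main obstacle throughout is the decoupling of the three independent contributions, which I would handle by the uniqueness of the L\'evy--It\^o decomposition of infinitely divisible Radon measures on the separable Banach space $V$. The same uniqueness drives the sufficiency argument for the $M$-piece, where the absence of an ideal property for $\Rad^p(\theta_M)$ forces the passage from radonifiability of $j_M$ on $[0,T]$ to radonifiability for arbitrary Borel restrictions to go through the explicit L\'evy measure description above rather than through a direct operator-level argument.
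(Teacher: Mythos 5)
Your argument is essentially sound, but it takes a genuinely different route from the paper's in both directions, and a few steps need to be made explicit. For sufficiency, the paper does not use the ideal property for the Wiener part; it observes the covariance domination $\scapro{Q_W^A v^\ast}{v^\ast}\le\scapro{Q_W^{[0,T]}v^\ast}{v^\ast}$ and invokes the comparison theorem for Gaussian covariances (Theorem 3.3.1 in \cite{Bogachev}). Your alternative via $j_W\circ M_{\1_A}$ and the ideal property of $\Rad(\gamma)$ is equally valid and arguably cleaner, since $j_W M_{\1_A}M_{\1_A}^\ast j_W^\ast=Q_W^A$. For the jump part both arguments rest on Lévy-measure domination, but note that $\nu_A$ is \emph{not} the restriction of $\nu_{[0,T]}$ to the set $F(A\times U)$ (points $F(s)u$ with $s\notin A$ may land in the same Borel set as points coming from $A$); what holds is only the setwise inequality $\nu_A\le\nu_{[0,T]}$ on $\Z(V)$, and the step ``a measure dominated by a genuine Lévy measure is itself a genuine Lévy measure'' is a nontrivial Banach-space result --- precisely Theorem 3.4 in \cite{Riedle14b}, which the paper cites and which you should not treat as automatic.

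For necessity the paper proceeds quite differently: it first splits off the Gaussian component using the symmetry of $I_Wv^\ast$ and its independence from $(I_b+I_M)v^\ast$ (Proposition 7.14.51 in \cite{Bogachev-Measure}), and then symmetrises $I_b+I_M$ against an independent copy of $M$ to eliminate the drift and compare Lévy measures. Your route through the Lévy--Khintchine decomposition of $P\circ Y^{-1}$ reaches the same conclusion but silently uses two substantial facts that must be stated: that a Radon measure on a separable Banach space all of whose finite-dimensional projections are infinitely divisible is infinitely divisible in the Radon sense, and that in the resulting unique triplet the Gaussian component is a Radon Gaussian measure and the Lévy measure is $\sigma$-additive on $\Borel(V)$; only then does matching the finite-dimensional triplets identify them with $Q_W$ and $\nu_{[0,T]}$. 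Finally, condition (i) asks for Pettis integrability of $F(\cdot)b$, i.e.\ a representing vector for $\1_AF(\cdot)b$ for \emph{every} $A\in\Borel([0,T])$, so in the necessity direction the decomposition must be run for arbitrary $A$ (or (i) recovered by subtraction once (ii) and (iii) are established), not only for $A=[0,T]$.
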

\begin{proof}
{\em If part:} for $A\in \Borel([0,T])$ define the cylindrical random variables
\begin{align*}
&I_b^A\colon V^\ast \to \R, \qquad I_b^Av^\ast=\int_A \scapro{v^\ast}{F(s)b}\, ds, \\
&I_W^A\colon V^\ast\to L^2_P(\Omega;\R),\qquad  I_W^A v^\ast=\int_0^T \1_{A}(s) F^\ast(s)v^\ast\, dW(s).\\
&  I_M^A\colon V^\ast\to L^2_P(\Omega;\R),\qquad I_M^A v^\ast=\int_0^T \1_{A}(s) F^\ast(s)v^\ast\, dM(s).
\end{align*}
We have to show that these cylindrical random variables are induced by genuine random variables
in $L_P^0(\Omega;V)$, respectively.
Condition (i) implies that there exists $v^A\in V$ such that
\begin{align}\label{eq.proof-int1}
  \scapro{v^A}{v^\ast}=I_b^A v^\ast\qquad\text{for all }v^\ast\in V^\ast.
\end{align}
The covariance operator of the cylindrical random variable $I_W^A$ is given by
\begin{align*}
  Q_W^A\colon V^\ast \to V, \qquad
    \scapro{Q_Wv^\ast}{w^\ast}=\int_A \scapro{i_C^\ast F^\ast(s)v^\ast}{i_C^\ast F^\ast (s)w^\ast}\, ds.
\end{align*}
It follows that
\begin{align*}
  \scapro{Q_W^Av^\ast}{v^\ast}\le \scapro{Q_W^{[0,T]}v^\ast}{v^\ast}
\qquad\text{for all }v^\ast\in V^\ast.
\end{align*}
Since Condition (ii) guarantees that $Q_W^{[0,T]}$ is the covariance operator of a Gaussian measure on $\Borel(V)$, Theorem 3.3.1 in \cite{Bogachev} implies that there exists a Gaussian measure on $\Borel(V)$ with
covariance operator $Q_W^A$. Thus, Theorem IV.2.5 in \cite{Vaketal} guarantees that there exists a random variable $Y_W^A\in L^0_P(\Omega;V)$ such that
\begin{align}\label{eq.proof-int2}
  \scapro{Y_W^A}{v^\ast}=I_W^Av^\ast \qquad\text{for all }v^\ast\in V^\ast.
\end{align}
From the independent increments of $M$ it follows that the cylindrical distribution of $I_M^A$ is infinitely divisible and that its cylindrical L\'evy measure $\nu_A$ is given by
\begin{align*}
  \nu_A\colon \Z(V)\to [0,\infty], \qquad
  \nu_A(B)=\int_{A\times U} \1_{B}(F(s)u)\,\rho(ds,du).
\end{align*}
Condition (iii) implies that $\nu_{[0,T]}$ is the genuine L\'evy measure of an infinitely divisible probability measure on $\Borel(V)$. Since
\begin{align*}
  \nu_A(B)\le \nu_{[0,T]}(B)\qquad\text{for all }B\in \Z(V),
\end{align*}
Theorem 3.4 in \cite{Riedle14b} implies that $\nu_A$ extends to a genuine L\'evy measure on $\Borel(V)$.
Theorem IV.2.5 in \cite{Vaketal} guarantees that there exists a random variable $Y_M^A\in L^0_P(\Omega;V)$ such that
\begin{align}\label{eq.proof-int3}
  \scapro{Y_M^A}{v^\ast}=I_M^Av^\ast \qquad\text{for all }v^\ast\in V^\ast.
\end{align}
It follows from \eqref{eq.proof-int1}, \eqref{eq.proof-int2} and \eqref{eq.proof-int3} that the
random variable $Y_A:=v^A + Y_W^A + Y_M^A$ satisfies \eqref{eq.def-int}.

{\em Only if part:} Let $I_b$, $I_W$ and $I_M$ denote the cylindrical random variables defined in the beginning of the proof for $A=[0,T]$. Stochastic integrability of $F$ implies that there exists a random variable $Y\in L^0_P(\Omega;V)$ such that
\begin{align*}
  \scapro{Y}{v^\ast}=I_b v^\ast + I_W v^\ast + I_M v^\ast \qquad\text{for all }v^\ast\in V^\ast.
\end{align*}
Since $I_W v^\ast$ is symmetric and independent of $(I_b + I_M)v^\ast$ for all $v^\ast\in V^\ast$,
it follows from Proposition 7.14.51 in \cite{Bogachev-Measure} that the cylindrical distributions of
$I_W$ and $I_b+I_M$ extend to probability measures on $\Borel(V)$. Consequently, $j_W$ is $\gamma$-radonifying and $I_b+I_M$ is induced by a random variable in $L^0_P(\Omega;V)$.
Let $M^\prime$ denote an independent copy of $M$. Then $I_b+I_M -I_b-I_{M^\prime}$
is induced by a  random variable $X\in L^0_P(\Omega;V)$ and $X$ is infinitely divisible.
Denoting the L\'evy measure of $X$ by $\xi$ it follows that
\begin{align*}
  \nu_{[0,T]}(B)\le \nu_{[0,T]}(B)+\nu_{[0,T]}(-B)=\xi(B) \qquad\text{for all }B\in \Z(V).
\end{align*}
Theorem 3.4 in \cite{Riedle14b} implies that $\nu_{[0,T]}$ extends to a genuine L\'evy measure on $\Borel(V)$, which yields that $j_M$ is $\theta_M$-radonifying.
\end{proof}

\vspace{10pt}
\noindent\textbf{Acknowledgments:} Theorem \ref{th.Banach} is based on unpublished discussions with O. van Gaans (Leiden). The author would like to thank Kai K{\"u}mmel (Jena) for  careful reading.
% and helpful comments and suggestions.

%\bibliographystyle{plain}
%\bibliography{RadonInt}

\end{document}